\colorlet{mdtRed}{red!50!black}
\definecolor{dblue}{rgb}{0,0,.6}
\numberwithin{equation}{section}
\newtheorem{theorem}[equation]{Theorem}
\newtheorem{lemma}[equation]{Lemma}
\newtheorem{definition}[equation]{Definition}
\newtheorem*{theorem*}{Theorem}
\newtheorem*{corollary*}{Corollary}
\newtheorem*{proposition*}{Proposition}
\theoremstyle{remark}
\def\subsection{
	\refstepcounter{equation}
	\noindent {\bf \arabic{section}.\arabic{equation}.}
}
\newcommand{\Z}{\mathbb{Z}}
\newcommand{\C}{\mathbb{C}}
\renewcommand{\P}{\mathbb{P}}
\newcommand{\mb}[1]{\mathbb{#1}}
\newcommand{\mc}[1]{\mathcal{#1}}
\begin{document}
	
\title[Ulrich bundles on general double plane covers]{Rank 2 Ulrich bundles on general double plane covers} 

\author[R. Sebastian]{Ronnie Sebastian} 

\address{Department of Mathematics, Indian Institute of Technology Bombay, Powai, Mumbai 400076, Maharashtra, India.} 

\email{ronnie@math.iitb.ac.in} 	
	
\author[A. Tripathi]{Amit Tripathi} 
	
\address{Department of Mathematics, Indian Institute of Technology Hyderabad, Kandi, Sangareddy, 502285, Telangana, India.} 
	
\email{amittr@gmail.com}

\subjclass[2010]{14E20, 14J60, 14H50}
	
\keywords{Ulrich bundles, double planes, Cayley-Bacharach}

\begin{abstract}
We prove that a double cover of $\mathbb{P}^2$ ramified 
along a general smooth curve $B$ of degree $2s$, for $s\geq 3$, 
supports a rank $2$ special Ulrich bundle.
\end{abstract}	
	
\maketitle

\section{Introduction}
Throughout this paper we work over the field of complex numbers $\C$. 
Let $X$ be a $d$-dimensional smooth projective variety. 
Unless mentioned otherwise, $\mc O_X(1)$ will always 
denote an ample and globally generated line bundle on $X$.
\begin{definition}\label{def-Ulrich}
A locally free sheaf (vector bundle) $E$ 
on $X$ is said to be Ulrich with respect to $\mc O_X(1)$ 
(or simply Ulrich when the bundle $\mc O_X(1)$ is understood) if 
the following two conditions are satisfied
\begin{enumerate}
	\item $H^i(X,E(-i))=0$ for all $i>0$\,,
	\item $H^j(X,E(-j-1))=0$ for all $j<n$\,.
\end{enumerate}
\end{definition}
We refer the reader to \cite[\S 2]{AK}
for basic definitions. 
In the literature authors usually 
define Ulrich with respect to a very ample line bundle, 
kindly see the next section for some remarks related to this. 
A conjecture of Eisenbud and Schreyer \cite{ES} 
states that every smooth projective variety supports 
an Ulrich bundle. Several people have constructed Ulrich
bundles on particular varieties and we list a few. 
They have been shown to exist on complete 
intersections by \cite{HUB}, on curves and del Pezzo surfaces 
by \cite{ES}, on $K3$ surfaces by \cite{AFO} and \cite{Faenzi}, 
on abelian 
surfaces by \cite{Be-16}, on ruled surfaces by \cite{Ap-Co-Mi},
on regular surface by \cite{Cas-ns-1}, \cite{Cas-rs},
on surfaces with $p_g=0$ and $q=1$ by
\cite{Cas-ns-2}, on surfaces of maximal Albanese dimension 
and some irregular surfaces by 
\cite{Lopez-0}, \cite{Lopez}.
The above list is far from being complete and we refer the reader 
to the above papers, for example \cite{Cas-ns-2}, and the references therein for 
more results. In \cite[Theorem 4.3]{CoHu} the authors
show, over the field of complex numbers, the existence 
of Ulrich bundles of rank two in a sufficiently
ample embedding.

Recently Narayanan and Parameswaran \cite{PN} 
studied the existence of Ulrich line bundles on 
a double plane $\pi:X \rightarrow \mathbb{P}^2$
branched along a smooth curve $B \subset \mathbb{P}^2$
of degree $2s$. 
In \cite[Theorem 1.5]{PN}, they prove that for each 
$s\geq 3$, there are special 
classes of double planes which admit Ulrich line bundles.
In \cite[Theorem 1.4]{PN} they show that a double plane 
branched along a generic smooth curve of degree 
$2s$, where $s \geq 3$, does not support an Ulrich 
line bundle. 

Let $X$ be a surface and let $K_X$ be the canonical line bundle. 
An Ulrich bundle $E$ of rank 2, with respect to $\mc O_X(1)$,  
is called special Ulrich if it also satisfies 
${\rm det}(E)\cong K_X\otimes \mc O_X(3)$.  Let $\pi:X\to \P^2$ be a 
degree 2 cover which is branched along a smooth 
curve $B\subset \P^2$ of degree $2s$. For such a map 
denote $\mc O_X(1):=\pi^*\mc O_{\mb P^2}(1)$ and by an Ulrich 
(respectively, special Ulrich) bundle on $X$ we will always mean 
Ulrich (respectively, special Ulrich) with respect to 
$\mc O_X(1)$.
In this note we show the following.
\begin{theorem}\label{main-th-intro}
Let $\pi: X \rightarrow \mathbb{P}^2$ be a double 
cover branched along a generic smooth curve $B \subset \mathbb{P}^2$
of degree $2s$, where $s\geq 3$. 
Then $X$ admits a special rank 2 Ulrich bundle. 
\end{theorem}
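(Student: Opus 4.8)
The plan is to recast the Ulrich property as a triviality statement for the push‑forward $\pi_*E$, and then to construct the bundle from a $2\times 2$ determinantal (Cayley--Bacharach) representation of the branch curve.

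\emph{Step 1 (reduction).} Since $\pi$ is finite, $H^i(X,E(-j))=H^i(\mathbb P^2,(\pi_*E)(-j))$ for all $i,j$ by the projection formula. The only vector bundles $F$ on $\mathbb P^2$ with $H^\bullet(F(-1))=H^\bullet(F(-2))=0$ are the trivial ones: such an $F$ has $c_1(F)=0$ and $\chi(F)=\operatorname{rk}F$, is $0$‑regular, hence globally generated with $h^0(F)=\operatorname{rk}F$, so the evaluation $\mathcal O_{\mathbb P^2}^{\oplus\operatorname{rk}F}\twoheadrightarrow F$ is an isomorphism. Consequently, if we exhibit a rank $2$ vector bundle $E$ on $X$ with $\pi_*E\cong\mathcal O_{\mathbb P^2}^{\oplus4}$, then $H^\bullet(E(-1))=H^\bullet(E(-2))=0$, so $E$ satisfies the conditions of Definition~\ref{def-Ulrich}; and if moreover $\det E\cong\mathcal O_X(s)=K_X\otimes\mathcal O_X(3)$, it is a special rank $2$ Ulrich bundle. (Riemann--Roch on $X$ then forces $c_2(E)=s^2$ and $h^0(E)=4$, as it must for an Ulrich bundle.) So it is enough to produce such an $E$ and to identify $\det E$.

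\emph{Step 2 (construction).} Realize $X$ as the relative spectrum over $\mathbb P^2$ of the $\mathcal O_{\mathbb P^2}$‑algebra $\mathcal O\oplus\mathcal O(-s)$, the multiplication $\mathcal O(-2s)\to\mathcal O$ being the equation $f\in H^0(\mathbb P^2,\mathcal O(2s))$ of $B$. Giving a coherent sheaf on $X$ whose push‑forward is $\mathcal O_{\mathbb P^2}^{\oplus4}$ is the same as giving a $4\times4$ matrix $M$ of forms of degree $s$ with $M^2=f\,I_4$ (it encodes multiplication by the tautological section $t\in H^0(\mathcal O_X(s))$, $t^2=\pi^*f$), and we want $M$ chosen so the associated $\mathcal O_X$‑module is a rank $2$ vector bundle. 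The key input, for which generality of $B$ is used, is that $f=\det N$ for some $2\times2$ matrix $N$ of forms of degree $s$: this is a Cayley--Bacharach/dimension statement — the determinant morphism on $2\times2$ matrices of degree‑$s$ forms is dominant for $s\ge2$ (equivalently, the ideal generated by four general forms of degree $s$ in three variables contains all forms of degree $2s$), while smooth curves are dense among plane curves of degree $2s$; one may also see it by constructing the line bundle $\operatorname{coker}(N)$ of degree $s^2$ on $B$ directly by a Serre construction. Set
\[
M=\begin{pmatrix}0&N\\ \operatorname{adj}N&0\end{pmatrix},\qquad M^2=\det(N)\,I_4=f\,I_4,
\]
and let $E$ be the resulting $\mathcal O_X$‑module; then $\pi_*E\cong\mathcal O_{\mathbb P^2}^{\oplus4}$ by construction.

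\emph{Step 3 (verification, and the hard part).} One must check $E$ is locally free of rank $2$. Over $y\notin B$ the fibre $M(y)$ squares to $f(y)I_4\ne0$ and a short computation shows that each of its $\pm\sqrt{f(y)}$‑eigenspaces is $2$‑dimensional, so $E$ has rank $2$ at both points of $\pi^{-1}(y)$; over $y\in B$ one has $\operatorname{rk}M(y)=\operatorname{rk}N(y)+\operatorname{rk}\operatorname{adj}N(y)=2$ as soon as $N(y)\ne0$, and $N(y)=0$ would make $y$ a singular point of $B=\{\det N=0\}$, excluded by smoothness; so by Nakayama $E$ is free of rank $2$ over the ramification point too. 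Next, the deck involution $\sigma$ replaces $M$ by $-M$, which is conjugate to $M$ (via $\operatorname{diag}(I_2,-I_2)$), so $\sigma^*E\cong E$ and $c_1(E)$ lies in $H^2(X,\mathbb Q)^\sigma=\pi^*H^2(\mathbb P^2,\mathbb Q)=\mathbb Q[\mathcal O_X(1)]$; writing $c_1(E)=k\,[\mathcal O_X(1)]$ and comparing with $c_1(\pi_*E)=0$ through Grothendieck--Riemann--Roch for $\pi$ (which yields $\pi_*c_1(E)=2s\,[\mathcal O_{\mathbb P^2}(1)]$) forces $k=s$; since $q(X)=0$ this gives $\det E\cong\mathcal O_X(s)$. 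By Step~1, $E$ is a special rank $2$ Ulrich bundle. I expect the main obstacle to be Step~2 — producing the determinantal (equivalently, Cayley--Bacharach) representation for a \emph{general} branch curve; generality is genuinely needed, since by \cite[Theorem~1.4]{PN} a general double plane carries no Ulrich line bundle, so $E$ cannot be split and such a structural input, rather than a formal existence argument, is unavoidable. The local freeness and the identification of $\det E$ are, by contrast, soft once $B$ is smooth.
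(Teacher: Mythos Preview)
Your proof is correct and takes a genuinely different route from the paper. Both arguments rest on the same arithmetic input---that a general form $F$ of degree $2s$ lies in an ideal $(F_1,F_2)$ generated by two forms of degree $s$, equivalently $F=\det N$ for a $2\times 2$ matrix of degree-$s$ forms---which the paper imports from \cite[Theorem~5.1]{Chiantini} and you sketch via dominance of the determinant map. From there the constructions diverge. The paper works on $X$: it builds $\mathcal E$ as a Tan--Viehweg syzygy bundle attached to the $0$-scheme $Z=H_1H_2-H_1H_2H_3$ (with $H_i=\pi^*F_i$, $H_3=T$), so that $\mathcal E$ is locally free by design, and then computes $\pi_*\mathcal E$ by pushing down the sequences \eqref{explicit-bundle-1} and \eqref{Koszul-equation} and peeling off summands until only $\mathcal O_{\mathbb P^2}^{\oplus4}$ remains. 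You work on $\mathbb P^2$: the matrix factorization $M=\begin{pmatrix}0&N\\ \operatorname{adj}N&0\end{pmatrix}$ with $M^2=fI_4$ \emph{is} the datum of an $\mathcal O_X$-module with $\pi_*E\cong\mathcal O_{\mathbb P^2}^{\oplus4}$, so the Ulrich condition is built in, and the residual work is the local freeness (which, as you note, is soft once $B$ is smooth---indeed $E$ is maximal Cohen--Macaulay over the regular surface $X$, hence locally free) and the identification $\det E\cong\mathcal O_X(s)$ via $\sigma$-invariance and GRR. Your approach is more economical for the Ulrich property itself; the paper's approach yields more explicit geometry (the section of $\mathcal E$ vanishing on $Z$, the Koszul-type resolution). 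The two bundles are in fact the same: the cokernel of $N$ on $B$ is the line bundle whose Cayley--Bacharach extension produces the paper's $\mathcal E$.
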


To prove the above result we use two inputs. The first 
is the well known correspondence 
between zero dimensional subschemes satisfying the Cayley-Bacharach property
and global sections of a rank 2 vector bundle, see 
\cite[\S5]{Tan-V}. Let $F$ be the degree $2s$ homogeneous polynomial
which defines $B$. Using \cite{Tan-V} we 
first prove
\begin{theorem}[Theorem \ref{main-theorem}]
	Let $\pi:X\to \P^2$ be a degree 2 cover which is branched along a smooth 
	curve $B\subset \P^2$ of degree $2s$, where $s\geq 3$. 
	Let $F$ denote the polynomial of degree $2s$
	which defines $B$. Assume that there are two polynomials $F_1$ and $F_2$
	of degree $s$ such that $F\in (F_1,F_2)$. Then $X$ supports a special Ulrich bundle
	of rank 2. 
\end{theorem}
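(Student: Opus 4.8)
The plan is to produce $E$ via the Hartshorne--Serre correspondence of \cite[\S5]{Tan-V}, applied to a zero-dimensional subscheme of $X$ built directly from $F_1,F_2$. A special Ulrich bundle must have $\det E\cong K_X\otimes\mc O_X(3)\cong\mc O_X(s)$, and this pins down $c_2(E)$: a rank $2$ Ulrich bundle on $X$ has $\pi_*E\cong\mc O_{\P^2}^{4}$, so $\chi(E)=4$, and the Riemann--Roch formula on $X$ (with $K_X\cong\mc O_X(s-3)$, $\mc O_X(1)^2=2$, $\chi(\mc O_X)=1+\binom{s-1}{2}$) forces $c_2(E)=s^2$. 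So I would look for an extension
\[
0\longrightarrow\mc O_X\longrightarrow E\longrightarrow \mc I_Z\otimes\mc O_X(s)\longrightarrow 0
\]
with $Z\subset X$ of length $s^2$, taken as follows. Put $W:=V(F_1,F_2)\subset\P^2$; the curves $V(F_1)$ and $V(F_2)$ share no component (a common component would divide $F$, impossible since $B$ is smooth, hence irreducible, while $\deg F=2s>s$), so $W$ is a complete intersection of type $(s,s)$, of length $s^2$, and the hypothesis $F\in(F_1,F_2)$ says precisely $W\subseteq B$. Let $R\subset X$ be the ramification divisor and $w\in H^0(X,\mc O_X(s))$ the section cutting it out, so $\mc O_X(R)\cong\mc O_X(s)$ and $\pi$ restricts to an isomorphism $R\xrightarrow{\sim}B$. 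Set $Z:=R\cap\pi^{-1}(W)=V(\pi^*F_1,\pi^*F_2,w)\subset X$. Since $W\subseteq B$, the map $\pi$ restricts to an isomorphism $Z\xrightarrow{\sim}W$, so $\deg Z=s^2$; and since $w$ vanishes identically on $Z$, for every closed subscheme $Y\subseteq Z$ with image $\bar Y=\pi(Y)$ and every integer $k$ one finds
\[
\pi_*\bigl(\mc I_Y\otimes\mc O_X(k)\bigr)\;\cong\;\mc I_{\bar Y/\P^2}(k)\oplus\mc O_{\P^2}(k-s),
\]
and hence (as $\pi$ is finite) $H^i(X,\mc I_Y\otimes\mc O_X(k))\cong H^i(\P^2,\mc I_{\bar Y/\P^2}(k))\oplus H^i(\P^2,\mc O_{\P^2}(k-s))$ for all $i$.

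By \cite[\S5]{Tan-V}, a locally free extension as above exists provided $Z$ satisfies the Cayley--Bacharach property with respect to $\omega_X\otimes\mc O_X(s)\cong\mc O_X(2s-3)$, i.e.\ every section of $\mc O_X(2s-3)$ vanishing on a colength-one subscheme $Z'\subset Z$ vanishes on $Z$. Applying the displayed formula with $\bar Y$ running over $W$ and over the colength-one $W'\subset W$, and using $H^0(\P^2,\mc I_{W/\P^2}(m))=(F_1,F_2)_m$, one sees that this is equivalent to the classical Cayley--Bacharach condition for $W$: every plane curve of degree $2s-3$ passing through a colength-one subscheme of $W$ passes through all of $W$. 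Since $W$ is a complete intersection of type $(s,s)$ in $\P^2$, this holds in degree $s+s-2-1=2s-3$ by the Cayley--Bacharach theorem for complete intersections. Thus a locally free rank $2$ bundle $E$ fitting into the displayed sequence exists, with $\det E\cong\mc O_X(s)\cong K_X\otimes\mc O_X(3)$ and $c_2(E)=\deg Z=s^2$; in particular $E$ is \emph{special}.

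It then remains to verify that $E$ is Ulrich, i.e.\ $H^i(X,E(-1))=H^i(X,E(-2))=0$ for all $i$ --- and this will depend only on $Z$, not on the chosen extension class. Twisting the defining sequence by $\mc O_X(-j)$ and using $H^0(X,\mc O_X(-j))=0$ shows $H^0(X,E(-j))\hookrightarrow H^0(X,\mc I_Z\otimes\mc O_X(s-j))$, which by the displayed formula equals $(F_1,F_2)_{s-j}\oplus H^0(\P^2,\mc O_{\P^2}(-j))=0$ for $j=1,2$ because $s-j<s$; hence $H^0(E(-1))=H^0(E(-2))=0$. As $E$ has rank $2$ with $\det E\cong\mc O_X(s)$ we have $E^\vee\cong E(-s)$, so Serre duality gives $H^2(E(-1))\cong H^0(E(-2))^\vee=0$ and $H^2(E(-2))\cong H^0(E(-1))^\vee=0$. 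A direct Hirzebruch--Riemann--Roch computation with the numerical data above yields $\chi(E(-1))=\chi(E(-2))=0$, and together with the vanishing of $H^0$ and $H^2$ this forces $H^1(E(-1))=H^1(E(-2))=0$. Therefore $E$ is a special rank $2$ Ulrich bundle, proving the theorem.

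The substantive step, and the one I expect to be the crux, is the choice of $Z$ together with the verification of its Cayley--Bacharach property --- the rest is bookkeeping with $\pi_*$, Serre duality and Riemann--Roch. Two points warrant care: defining $Z$ as $R\cap\pi^{-1}(W)$ (rather than as some reduced preimage) makes $\pi\colon Z\xrightarrow{\sim}W$ an isomorphism whether or not $W$ is reduced, so that the $\pi_*$-formula holds verbatim and no genericity on $F_1,F_2$ beyond the hypothesis is needed; and one must confirm that the twist appearing in the correspondence of \cite{Tan-V} is exactly $\omega_X\otimes\mc O_X(s)\cong\mc O_X(2s-3)$, since it is precisely the coincidence of this $2s-3$ with the Cayley--Bacharach degree of a complete intersection of type $(s,s)$ in $\P^2$ that makes the construction succeed.
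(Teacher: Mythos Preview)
Your construction is correct and in fact produces the very same bundle as the paper: the scheme $Z=V(\pi^*F_1,\pi^*F_2,w)$ is exactly the paper's $H_1H_2H_3$, and your splitting $\pi_*\mc I_Y\cong\mc I_{\bar Y}\oplus\mc O_{\P^2}(-s)$ is the paper's equation for $\pi_*I_Z$. The difference lies in two places. First, the paper invokes the residual-scheme formulation of \cite[Theorem~10]{Tan-V} (writing $Z=H_1H_2-H_1H_2H_3$ and checking that $H_1H_2H_3$ is Cohen--Macaulay of pure codimension~2), which sidesteps an explicit Cayley--Bacharach verification and, more importantly, hands them the syzygy sequence $0\to\mc E\to\mc O_X(s)^{\oplus 3}\to I_Z(2s)\to 0$. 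Second, and this is the real divergence, the paper uses that syzygy sequence to prove \emph{directly} that $\pi_*\mc E\cong\mc O_{\P^2}^{\oplus 4}$: they compute $h^0(\pi_*\mc E)=4$, show $h^1(\pi_*\mc E)=0$, peel off an $\mc O_{\P^2}$ summand, and finish with a short case analysis on the cokernel of a map $\mc O_{\P^2}(-s)\to\pi_*\mc O_X$ (this last step is where $s\ge 3$ enters, via $H^1(\mc O_C)\neq 0$). Your route instead checks the six vanishings $H^i(E(-j))=0$ one at a time: $H^0$ from the ideal-sheaf sequence and the Koszul resolution of $\mc I_W$, $H^2$ from the isomorphism $E^\vee\cong E(-s)$ plus Serre duality, and $H^1$ from a Riemann--Roch computation showing $\chi(E(-1))=\chi(E(-2))=0$. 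Your argument is arguably more streamlined and makes the role of the ``special'' condition $\det E\cong K_X(3)$ transparent (it is what makes the Serre-duality step pair up the two twists); the paper's argument is more hands-on but yields the stronger statement $\pi_*\mc E\cong\mc O_{\P^2}^{\oplus 4}$ without appealing to Riemann--Roch.
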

The second
input is the first point in \cite[Theorem 5.1]{Chiantini} 
which enables us to conclude that for the general degree $2s$ hypersurface 
$F$ we can find degree $s$ hypersurfaces $F_1$ and $F_2$  
such that $F\in (F_1,F_2)$. We do not know if this holds for all
smooth degree $2s$ hypersurfaces. 

It has been brought to our attention that Mohan Kumar, P. 
Narayanan and A.J. Parameswaran have proved, using a different 
method, that every double plane
cover supports a rank 2 Ulrich bundle. \\

\noindent
{\bf Acknowledgements}. We thank Enrico Carlini and Luca Chiantini
for several helpful discussions related to their article \cite{Chiantini}. 
We thank Gianfranco Casnati for several useful comments. 
We thank the referee for an extremely careful reading of this 
article and for numerous useful suggestions which have improved 
the exposition. 
%In particular, the present simpler proof and a more 
%general statement of Proposition \ref{chern-class-ulrich} was brought to
%our attention by the referee.

\section{Existence of Ulrich bundles}
To show that a bundle $E$ on $X$ is Ulrich we will use the following 
criterion. 
\begin{lemma}
	Let $X$ be a $d$-dimensional smooth projective variety and 
	let $\pi:X\to \mb P^d$ be a surjective and finite map of degree $e$.
	A bundle $E$ on $X$ is Ulrich with respect to 
	$\pi^*\mc O_{\mb P^d}(1)$ if and only 
	if $\pi_*E\cong \mc O_{\mb P^d}^{e\cdot {\rm rank}(E)}$.
\end{lemma}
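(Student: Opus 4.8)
The plan is to transport the defining vanishing of ``Ulrich'' down along the finite map $\pi$, where it becomes the statement that $\pi_*E$ is an Ulrich sheaf on $\mathbb{P}^d$, and then to identify the Ulrich sheaves on projective space with the trivial bundles.

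First I would record the elementary reductions. Since $X$ is smooth (hence Cohen--Macaulay) and $\mathbb{P}^d$ is regular of the same dimension $d$, the finite surjection $\pi$ is automatically flat (``miracle flatness''); hence $\pi_*\mathcal{O}_X$ is locally free of rank $e$ and, for a rank-$r$ vector bundle $E$ on $X$, the sheaf $\mathcal{F}:=\pi_*E$ is a vector bundle of rank $er$ on $\mathbb{P}^d$. As $\pi$ is finite and affine its higher direct images vanish, so the projection formula and the Leray spectral sequence give $H^i\bigl(X,E\otimes\pi^*\mathcal{O}_{\mathbb{P}^d}(k)\bigr)\cong H^i\bigl(\mathbb{P}^d,\mathcal{F}(k)\bigr)$ for all $i$ and all $k$. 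Under this identification the two Ulrich conditions for $E$ read exactly $H^i(\mathbb{P}^d,\mathcal{F}(-i))=0$ for $i>0$ and $H^j(\mathbb{P}^d,\mathcal{F}(-j-1))=0$ for $j<d$, i.e.\ $E$ is Ulrich on $X$ precisely when $\mathcal{F}$ is an Ulrich sheaf on $\mathbb{P}^d$. The ``if'' direction is then immediate: $\mathcal{O}_{\mathbb{P}^d}$ is Ulrich (the groups $H^i(\mathcal{O}(-i))$ for $0<i\le d$ and $H^j(\mathcal{O}(-j-1))$ for $0\le j<d$ all vanish, using that intermediate cohomology on $\mathbb{P}^d$ vanishes and that $H^0(\mathcal{O}(-1))=0=H^d(\mathcal{O}(-d))$), so $\mathcal{O}_{\mathbb{P}^d}^{er}$ is Ulrich, and therefore $E$ is Ulrich whenever $\pi_*E\cong\mathcal{O}_{\mathbb{P}^d}^{er}$.

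For the converse it remains to show that an Ulrich vector bundle $\mathcal{F}$ on $\mathbb{P}^d$ is trivial. My route would be to first obtain the full strip of vanishing $H^q(\mathbb{P}^d,\mathcal{F}(p))=0$ for all $q$ and all $-d\le p\le-1$, and then feed it into the Beilinson spectral sequence $E_1^{p,q}=H^q(\mathbb{P}^d,\mathcal{F}(p))\otimes\Omega^{-p}_{\mathbb{P}^d}(-p)$ ($-d\le p\le 0$), which converges to $\mathcal{F}$ in total degree $0$ and to $0$ otherwise; once the columns $p=-1,\dots,-d$ vanish only the column $p=0$ remains, the sequence degenerates at $E_1$, and one reads off $H^q(\mathcal{F})=0$ for $q>0$ together with $\mathcal{F}\cong H^0(\mathbb{P}^d,\mathcal{F})\otimes\mathcal{O}_{\mathbb{P}^d}$, so $\mathcal{F}\cong\mathcal{O}_{\mathbb{P}^d}^{h^0(\mathcal{F})}$ and comparing ranks gives $h^0(\mathcal{F})=er$. (Equivalently, an Ulrich sheaf on $\mathbb{P}^d$ corresponds to a graded maximal Cohen--Macaulay module over $\mathbb{C}[x_0,\dots,x_d]$, which is free by Auslander--Buchsbaum and then trivial by the numerics built into the Ulrich condition.)

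The one step with real content is producing that strip, and here I would use the self-duality of Ulrich bundles. Serre duality on the $d$-fold $X$ turns ``$E$ Ulrich'' into ``$E^\vee\otimes\omega_X\otimes\pi^*\mathcal{O}_{\mathbb{P}^d}(d+1)$ Ulrich'' (the two conditions are interchanged). Pushing this dual bundle down via Grothendieck duality for the finite flat map $\pi$ --- namely $\pi_*(E^\vee\otimes\omega_X)\cong\mathcal{H}om_{\mathcal{O}_{\mathbb{P}^d}}(\pi_*E,\omega_{\mathbb{P}^d})\cong\mathcal{F}^\vee\otimes\omega_{\mathbb{P}^d}$, since $\pi^!\omega_{\mathbb{P}^d}\cong\omega_X$ and $\mathcal{F}$ is locally free --- together with the projection formula and $\omega_{\mathbb{P}^d}=\mathcal{O}(-d-1)$ gives $\pi_*\bigl(E^\vee\otimes\omega_X\otimes\pi^*\mathcal{O}_{\mathbb{P}^d}(d+1)\bigr)\cong\mathcal{F}^\vee$. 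Hence both $\mathcal{F}$ and $\mathcal{F}^\vee$ are Ulrich on $\mathbb{P}^d$, so both are $0$-regular. Mumford's lemma applied to $\mathcal{F}$ yields $H^q(\mathcal{F}(p))=0$ for $q\ge 1$ and $p\ge -q$; applied to $\mathcal{F}^\vee$ and translated back by Serre duality on $\mathbb{P}^d$ it yields $H^q(\mathcal{F}(p))=0$ for $0\le q\le d-1$ and $p\le -q-1$. For integral $p\in\{-1,\dots,-d\}$ every pair $(q,p)$ with $0\le q\le d$ lies in one of these two ranges, so the whole strip vanishes. I expect the duality bookkeeping --- in particular verifying that $\pi_*$ of the dualized Ulrich bundle is exactly $\mathcal{F}^\vee$, with the twist by $\mathcal{O}(d+1)$ cancelling $\omega_{\mathbb{P}^d}$ --- to be the only delicate point; everything else is formal.
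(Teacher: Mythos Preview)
Your argument is correct, and it shares with the paper's proof the crucial observation that both $\pi_*E$ and $(\pi_*E)^\vee$ are $0$-regular. The differences are twofold. First, to see that $\mathcal{F}^\vee$ is Ulrich you pass through Serre duality on $X$ and Grothendieck duality for $\pi$; the paper instead asserts directly that $(\pi_*E)^\vee$ is $0$-regular, which follows more simply from Serre duality on $\mathbb{P}^d$ alone: $H^i((\pi_*E)^\vee(-i))\cong H^{d-i}(\pi_*E(-(d-i)-1))^\vee$, and the latter vanishes by Ulrich condition (2). Second, to pass from ``$\mathcal{F}$ and $\mathcal{F}^\vee$ both $0$-regular'' to ``$\mathcal{F}$ trivial'', you invoke the Beilinson spectral sequence after establishing the full vanishing strip, whereas the paper observes that the combined regularity kills all intermediate cohomology of $\mathcal{F}(k)$ for every $k$, applies Horrocks' splitting criterion to write $\pi_*E$ as a direct sum of line bundles $\mathcal{O}(a)$, and then checks that the Ulrich vanishing forces each $a=0$. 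Your route is perfectly valid and perhaps more conceptual, but it brings in heavier machinery (relative duality, Beilinson) than the paper's more elementary Horrocks-based argument.
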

\begin{proof}
	Let us first assume that $\pi_*E\cong  \mc O_{\mb P^d}^{e\,{\rm rank}(E)}$.
	Since the map is finite, and using projection formula, 
	we have $H^i(X,E(k))=H^i(\mb P^d,\pi_*E(k))$ for 
	all $i,k\in \Z$. Since $\pi_*E\cong \mc O_{\mb P^d}^{e\,{\rm rank}(E)}$
	it is clear that the conditions in Definition \ref{def-Ulrich}
	are satisfied. 
	
	Conversely, assume that $E$ satisfies the conditions in Definition \ref{def-Ulrich}. 
	Then it follows that 
	$\pi_*E$ and $(\pi_*E)^\vee$ are 0-regular of rank $e\cdot{\rm rank}(E)$. 
	Thus, both of them are $m$-regular 
	for all $m\geq 0$. From this it easily follows that $H^i(\mb P^d,\pi_*E(k))=0$ 
	for all $k\in \Z$ and for all $1\leq i\leq d-1$, that is, $\pi_*E$ is an ACM 
	bundle. Now applying Horrocks criterion, see \cite{Hor} or
	\cite[Theorem 2.3.1]{Okonek} where it is stated more precisely, 
	we get that $\pi_*E$ is a direct sum of line bundles. 
	If $\mc O_{\mb P^d}(a)$ is a summand of $\pi_*E$ then we get
	that $H^i(\mb P^d,\mc O_{\mb P^d}(a-i))=0$ for all $i>0$ and 
	$H^j(\mb P^d,\mc O_{\mb P^d}(a-j-1))=0$ for all $j<d$. 
	In particular, by taking $i=d$ and $j=0$ it follows 
	that $a=0$. This shows that $\pi_*E\cong \mc O_{\mb P^d}^{e\cdot{\rm rank}(E)}$.
\end{proof}
As mentioned before, authors usually define Ulrich bundles with respect to very ample line bundles. 
However, the existence of an Ulrich bundle with respect to an ample 
and globally generated line bundle $L$ ensures that there are Ulrich 
bundles with respect to $L^{\otimes n}$ for all $n>0$, see \cite[Proposition 3]{AK}
and the remarks following it.\\

\subsection{\bf Double covers of $\P^2$}\label{double-planes-intro}. We briefly recall
the main properties of double covers which we will use. A general
reference for this is \cite[\S17]{Barth}.
Let $B\subset \P^2$ be a smooth curve of degree $2s$ defined by a 
homogeneous polynomial $F$. Let $\pi:X\to \P^2$
be the double cover of $\P^2$ branched along $B$, the construction of which 
we briefly explain for the benefit of the reader. 
Let $\mb A$ denote the total space of the line bundle $A=\mc O_{\mb P^2}(s)$,
$\pi:\mb A\to \mb P^2$ the projection and 
\begin{equation}\label{tautological-section}
	T\in H^0(\mb A, \pi^*A)
\end{equation} 
be the tautological section. Define $X$ to be the subvariety of 
$\mb A$ defined by the section 
$T^2 - \pi^*F\in H^0(\mb A, \pi^*A^{\otimes 2}) =H^0(\mb A, \pi^*\mc O_{\mb P^2}(2s))$. 
We will abuse notation and denote the composite $X\subset \mb A\to \mb P^2$ also by $\pi$. 
Then $\pi$ is a finite map of degree 2 between
projective varieties. We list the important 
properties of double covers that we will use, see \cite[Lemma 17.1, Lemma 17.2]{Barth}.
\begin{enumerate}
	\item If $B$ is smooth then $X$ is smooth, this is explained 
	in the sentence preceding \cite[Lemma 17.1]{Barth}.
	\item Let $R\subset X$ denote the reduced divisor $\pi^{-1}(B)$.
		Then $\pi^*\mc O_{\P^2}(s)\cong \mc O_X(R)$. 
	\item\label{canonical} The canonical bundle $K_X\cong \mc O_X(s-3)$.
	\item\label{splitting} $\pi_*\mc O_X=\mc O_{\P^2}\oplus \mc O_{\P^2}(-s)$.
\end{enumerate}

\subsection{\bf Existence of Ulrich bundles}.
In this section we shall prove that a general double plane cover supports a 
special Ulrich bundle of rank 2. We will use the result in \cite[Theorem 10]{Tan-V}
to construct a rank 2 bundle on $X$. For the benefit of the reader we state the main 
result from \cite{Tan-V} that we need, but first we 
recall some definitions from \cite[page 2]{Tan-V}.
Given a subscheme 
$Z_2 \subset Z_1$, the ``complement” $Z$ of $Z_2$ in $Z_1$ is the canonical
closed subscheme $Z \subset  Z_1$ with sheaf of ideals 
$I_{Z} = [I_{Z_1} : I_{Z_2}]$, that is, for any open
set $U\subset X$, we define
$$I_{Z}(U ) := \{g \in \mc O_X (U ) \,\vert\, gI_{Z_2}(U ) \subset I_{Z_1} (U )\}\,.$$
We call $Z$ the residual subscheme of $Z_2$ in $Z_1$ and denote it
by
$Z = Z_1 - Z_2$ in the statement of the next theorem, which 
is \cite[Theorem 10]{Tan-V}.
There are three equivalences, but we state only
two of these. 
\begin{theorem}[Theorem 10, \cite{Tan-V}]\label{th-TV}
	Let $X$ be a complex smooth projective variety of dimension $n\geq 2$. 
	Let $Z\subset X$ be a subscheme of pure codimension $2$. 
	Then the following are equivalent:
	\begin{enumerate}[(1)]
		\item $Z$ is the zero subscheme of a section of a rank 2 
			vector bundle $\mc E$.
		\item There are hypersurfaces $D_1,D_2,D_3$ such that 
		$D_1$ and $D_2$ have no common components, $Z=D_1\cap D_2-D_1\cap D_2\cap D_3$
		and such that $D_1\cap D_2\cap D_3$ is of pure codimension 2 and is Cohen-Macaulay.
	\end{enumerate}
Further, if (1) and (2) hold then ${\rm det}(\mc E)\equiv \mc O_X(D_1+D_2-D_3)$.
\end{theorem}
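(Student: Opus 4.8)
The statement is the Serre--Hartshorne correspondence between rank-$2$ sheaves and codimension-$2$ subschemes, packaged in liaison-theoretic language. I would fix once and for all a very ample $\mc O_X(1)$ — so that each ``hypersurface'' $F_i$ is a section of a line bundle $\mc O_X(F_i)$ and the additive notation $F_1+F_2-F_3$ on divisor classes makes sense — recall that the zero locus of a regular section of a rank-$2$ bundle is Cohen--Macaulay of pure codimension $2$ and locally a complete intersection, and prove the two implications separately, tracking $\det\mc E$ throughout.

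For \emph{(2) $\Rightarrow$ (1)}: put $Y:=V(F_1)\cap V(F_2)$, which, since $F_1,F_2$ have no common component, is a complete intersection and hence Cohen--Macaulay of pure codimension $2$, with Koszul resolution $0\to\mc O_X(-F_1-F_2)\to\mc O_X(-F_1)\oplus\mc O_X(-F_2)\to I_Y\to0$. By hypothesis $Z$ is residual in $Y$ to the Cohen--Macaulay subscheme $W:=F_1F_2F_3$, so $Z$ and $W$ are (generalized) linked by $Y$. Liaison theory then yields that $Z$ too is Cohen--Macaulay of pure codimension $2$, and — the decisive point — that $\mathcal{E}xt^1\bigl(I_Z(D),\mc O_X\bigr)\cong\mc O_Z$ for $D:=F_1+F_2-F_3$, the trivialisation being furnished by $F_3$ together with the adjunction formula $\omega_Y\cong\mc O_Y(K_X+F_1+F_2)$ and the Cohen--Macaulayness of $W$. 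Picking $\xi\in\mathrm{Ext}^1(I_Z(D),\mc O_X)$ that maps to a nowhere-vanishing section of $\mc O_Z$ under the local-to-global map, the associated extension
\[ 0\to\mc O_X\to\mc E\to I_Z(D)\to0 \]
has \emph{locally free} middle term (this is exactly where Cohen--Macaulayness of $W$ is used: without it one obtains only a rank-$2$ reflexive sheaf), and $\det\mc E\cong\det\bigl(I_Z(D)\bigr)\cong\mc O_X(D)$ because $Z$ has codimension $\geq2$. So $\mc E$ is a rank-$2$ bundle with a section cutting out $Z$, and $\det\mc E\equiv F_1+F_2-F_3$.

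For \emph{(1) $\Rightarrow$ (2)}: begin with the standard sequence $0\to\mc O_X\xrightarrow{\,s\,}\mc E\to I_Z(\det\mc E)\to0$ attached to the section $s$ of $\mc E$ with zero scheme $Z$. For $k\gg0$ we have $H^1(\mc O_X(k))=0$ and $I_Z(\det\mc E+k)$ globally generated, so two general sections $F_1,F_2\in H^0(I_Z(\det\mc E+k))$ lift to maps $\tilde F_1,\tilde F_2\colon\mc O_X(-k)\to\mc E$ and, by Bertini, have no common component, with $Y:=V(F_1)\cap V(F_2)$ Cohen--Macaulay of pure codimension $2$ containing $Z$. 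Composing $(\tilde F_1,\tilde F_2)\colon\mc O_X(-k)^{\oplus2}\to\mc E$ with the surjection $\mc E\to\det\mc E$ (wedge with $s$) recovers $(F_1,F_2)\colon\mc O_X(-k)^{\oplus2}\to\mc O_X(\det\mc E)$, whose kernel is the line bundle $\mc O_X(\det\mc E-F_1-F_2)$; since the kernel of $\mc E\to\det\mc E$ is the image of $s$, namely $\mc O_X$, the map $(\tilde F_1,\tilde F_2)$ carries this kernel into $\mc O_X$, which is precisely the datum of a section $F_3\in H^0\bigl(\mc O_X(F_1+F_2-\det\mc E)\bigr)$. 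Standard liaison bookkeeping, together with the Cohen--Macaulayness of $Z$, then shows that $(F_1,F_2,F_3)$ satisfies the conditions of (2) — with $Z=F_1F_2-F_1F_2F_3$ and $\det\mc E\equiv F_1+F_2-F_3$.

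The genuinely delicate step is the assertion in (2) $\Rightarrow$ (1) that $\mathcal{E}xt^1(I_Z(D),\mc O_X)$ is \emph{globally} trivial and generated by a single section — equivalently, that the triple $(F_1,F_2,F_3)$ with $F_1F_2F_3$ Cohen--Macaulay encodes precisely the Cayley--Bacharach property of $Z$ relative to $|D|$; this is what promotes the Serre extension from reflexive to locally free. I would route this through the third, unstated equivalence of \cite[Theorem 10]{Tan-V}, namely the existence of a self-dual length-$2$ locally free resolution of $\mc O_Z$ obtained as the mapping cone of the Koszul complex of $Y$ against a resolution of $\mc O_W$, and pass both implications through that intermediate description. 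The remaining points — genericity of $F_1,F_2$ so that $Y$ has the expected codimension, and the precise (possibly non-reduced) scheme structure on the residual of $Z$ — are routine but must be handled with care.
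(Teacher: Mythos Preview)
The paper does not prove this theorem; it is quoted from \cite{Tan-V} and used as a black box. The only information the paper adds is the remark immediately following the statement: the bundle $\mc E$ arising in (2)$\Rightarrow$(1) is realised concretely as $\mc F(F_1+F_2)$, where $\mc F$ is the syzygy sheaf in the sequence $0\to\mc F\to\bigoplus_i\mc O_X(-F_i)\to I_{F_1F_2F_3}\to0$. This is not the Serre-extension construction you lead with: Tan--Viehweg build $\mc E$ directly from the syzygies of the three hypersurface equations, and local freeness of $\mc F$ follows from the Cohen--Macaulayness of $F_1F_2F_3$ by a depth/projective-dimension count, bypassing entirely the question of whether $\mathcal{E}xt^1\bigl(I_Z(D),\mc O_X\bigr)$ is globally trivial on $Z$. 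Your extension-class route is the classical Serre--Hartshorne picture and is correct in outline, but the step you yourself flag as delicate --- promoting the reflexive extension to a locally free one --- genuinely requires $Z$ to be subcanonical in the right degree, and that is not immediate from Cohen--Macaulayness of $W$ alone; your proposed detour through the ``third, unstated equivalence'' via the mapping cone of the Koszul complex against a resolution of $\mc O_W$ is precisely the syzygy construction the paper records, so in the end the two approaches converge. The syzygy presentation buys a clean local-freeness argument with no global Ext computation; your presentation makes the Cayley--Bacharach interpretation and the determinant formula more transparent.
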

With notation as above let $\mc E$ be the bundle which sits in the following short 
exact syzygy sequence, this is explained just before  \cite[Theorem 10]{Tan-V}.
\begin{equation}\label{explicit-bundle}
0\to \mc E(-D_1-D_2) \to \bigoplus_i\mc O_{X}(-D_i)\to I_{D_1\cap D_2\cap D_3}\to 0\,.
\end{equation}

\begin{theorem}\label{main-theorem}
	Let $\pi:X\to \P^2$ be a degree 2 cover which is branched along a smooth 
	curve $B\subset \P^2$ of degree $2s$, where $s\geq 3$. 
	Let $F$ denote the polynomial of degree $2s$
	which defines $B$. Assume that there are two polynomials $F_1$ and $F_2$
	of degree $s$ such that $F\in (F_1,F_2)$. Then $X$ supports a special Ulrich bundle
	of rank 2. 
\end{theorem}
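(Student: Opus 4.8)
The plan is to produce $\mc E$ by feeding a syzygy coming from the equation of $X$ into the Tan--Van de Ven correspondence, and then to check the Ulrich property by pushing forward to $\P^2$. First I would write $F=G_1F_1+G_2F_2$; comparing degrees forces $\deg G_1=\deg G_2=s$. On $X$ one has three sections $\pi^{*}F_1,\ \pi^{*}F_2,\ T$ of $\mc O_X(s)$, where $T$ is the tautological section with $T^{2}=\pi^{*}F$. Since $B=\{F=0\}$ is smooth it is irreducible, hence $F$ is irreducible and $F_1,F_2$ have no common factor; so $Z:=V(\pi^{*}F_1,\pi^{*}F_2,T)$ is zero-dimensional. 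Applying \eqref{explicit-bundle} and Theorem~\ref{th-TV} to the curves $\pi^{*}F_1$, $\pi^{*}F_2$ and the ramification divisor $R=\{T=0\}$ (the first two share no component since $F_1,F_2$ do not and $\pi$ is finite surjective) gives the rank $2$ bundle
\[
\mc E\;=\;\ker\!\Big(\mc O_X(s)^{\oplus 3}\xrightarrow{\;(\pi^{*}F_1,\ \pi^{*}F_2,\ T)\;}\mc O_X(2s)\Big),
\]
which is locally free because the image of this map is $I_Z(2s)$, a sheaf of homological dimension $\le1$ on the smooth surface $X$. Comparing determinants in $0\to\mc E\to\mc O_X(s)^{\oplus3}\to I_Z(2s)\to 0$ yields $\det\mc E\cong\mc O_X(s)$, hence $\det\mc E\cong K_X\otimes\mc O_X(3)$ since $K_X\cong\mc O_X(s-3)$; so $\mc E$ will be special once it is known to be Ulrich. (One checks $(\pi^{*}G_1,\pi^{*}G_2,-T)$ is a global section of $\mc E$, recording the relation $T^{2}=\pi^{*}F$; this is the section of the correspondence.)

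By the criterion proved above, $\mc E$ is Ulrich iff $\pi_{*}\mc E\cong\mc O_{\P^2}^{\oplus4}$. I would apply $\pi_{*}$ to $0\to\mc E\to\mc O_X(s)^{\oplus3}\to I_Z(2s)\to 0$ — exact, as $\pi$ is finite hence affine — using the decomposition $\pi_{*}\mc O_X(k)\cong\mc O_{\P^2}(k)\oplus\mc O_{\P^2}(k-s)$ for a double plane together with the identity $T\cdot(a+Tb)=\pi^{*}F\cdot b+Ta$ in $\pi_{*}\mc O_X$. Writing the resulting map $\big(\mc O_{\P^2}(s)\oplus\mc O_{\P^2}\big)^{\oplus3}\to\mc O_{\P^2}(2s)\oplus\mc O_{\P^2}(s)$ out in coordinates, two of the six summands of the source appear as trivial direct summands of the kernel, and one is left with
\[
\pi_{*}\mc E\;\cong\;\mc O_{\P^2}^{\oplus2}\ \oplus\ \mc K,\qquad
\mc K:=\ker\!\Big(\mc O_{\P^2}(s)^{\oplus2}\oplus\mc O_{\P^2}\xrightarrow{\;(F_1,\,F_2,\,F)\;}\mc O_{\P^2}(2s)\Big).
\]

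Here the hypothesis $F\in(F_1,F_2)$ enters. Using $F=G_1F_1+G_2F_2$, factor $(F_1,F_2,F)=(F_1,F_2)\circ\psi$ where $\psi(a,b,c)=(a+G_1c,\ b+G_2c)\colon\mc O_{\P^2}(s)^{\oplus2}\oplus\mc O_{\P^2}\to\mc O_{\P^2}(s)^{\oplus2}$ is surjective with $\ker\psi\cong\mc O_{\P^2}$, and $\ker(F_1,F_2)\cong\mc O_{\P^2}$ by the Koszul resolution of the complete intersection $\{F_1=F_2=0\}$. Since $\mc K=\psi^{-1}\big(\ker(F_1,F_2)\big)$, one gets $0\to\mc O_{\P^2}\to\mc K\to\mc O_{\P^2}\to 0$, which splits because $H^{1}(\P^2,\mc O_{\P^2})=0$; thus $\pi_{*}\mc E\cong\mc O_{\P^2}^{\oplus4}$, so $\mc E$ is Ulrich, and being rank $2$ with $\det\mc E\cong K_X\otimes\mc O_X(3)$ it is special Ulrich. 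The step I expect to be the main obstacle is the pushforward computation: one must track carefully the $\mc O_{\P^2}$-module structure that multiplication by $T$ puts on $\pi_{*}\mc O_X(s)$, and recognize that after the two trivial summands split off, exactly the syzygy bundle of $(F_1,F_2,F)$ on $\P^2$ remains — at which point $F\in(F_1,F_2)$ is seen to be precisely what forces that bundle to split.
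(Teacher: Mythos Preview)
Your argument is correct, and it reaches the same bundle $\mc E$ as the paper does, but the verification that $\pi_*\mc E\cong\mc O_{\P^2}^{\oplus 4}$ proceeds quite differently. The paper pushes forward the \emph{section} sequence $0\to\mc O_X\to\mc E\to I_Z(s)\to 0$, uses the trace to split $\pi_*I_Z\cong I_{Z'}\oplus\mc O_{\P^2}(-s)$, computes several cohomology groups to obtain $h^1(\pi_*\mc E)=0$, then peels off one trivial summand $\pi_*\mc E=\mc G\oplus\mc O_{\P^2}$ and analyzes the rank~$3$ piece $\mc G$ via a pullback diagram; a case distinction on the cokernel of a map $\mc O_{\P^2}(-s)\to\pi_*\mc O_X$ is resolved using $H^1(\mc O_C)\neq 0$ for a degree~$s$ plane curve, which is where $s\geq 3$ enters. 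You instead push forward the \emph{syzygy} sequence $0\to\mc E\to\mc O_X(s)^{\oplus 3}\to I_Z(2s)\to 0$ and write the resulting $6\times 2$ matrix over $\P^2$ explicitly via the module structure $T\cdot(a+Tb)=Fb+Ta$; the block involving the identity map on $a_3$ immediately splits off $\mc O_{\P^2}^{\oplus 2}$ and leaves precisely the syzygy bundle of $(F_1,F_2,F)$, whose triviality is then a one--line consequence of $F\in(F_1,F_2)$ and the Koszul relation for the regular sequence $F_1,F_2$. Your route is shorter and more transparent: it bypasses the cohomology computations and the case analysis, it makes no visible use of $s\geq 3$, and it shows very cleanly that the hypothesis $F\in(F_1,F_2)$ is exactly the condition forcing the pushforward to split. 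The paper's route, on the other hand, stays closer to the Tan--Viehweg/Cayley--Bacharach framework and to the ideal sheaf $I_{Z'}$ on $\P^2$, which is perhaps more in keeping with the geometric viewpoint but at the cost of an extra case to exclude.
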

\begin{proof}
	First let us note that there is no non-constant polynomial $H$ 
	which divides both $F_1$ and $F_2$, or else $H$ will divide 
	$F$, contradicting the smoothness of $B$. Thus, 
	the subscheme of $\mb P^2$ defined by the ideal $(F_1,F_2)$ is 0-dimensional and is contained 
	in $B$. We denote this by $Z'$ the subscheme defined by the ideal $(F_1,F_2)$. 
	Consider the scheme theoretic inverse image 
	$Z_1:=\pi^{-1}(Z')$. 
	
	For $i=1,2$ take $H_i=\pi^*F_i\in H^0(X,\mc O_X(s))$. 
	If $D_i$ denotes the divisor defined by $H_i$ then $Z_1=D_1\cap D_2$
	in the notation of Theorem \ref{th-TV}.
	Take $H_3=T\in H^0(X,\mc O_X(s))$ (see equation \eqref{tautological-section},
	by abuse of notation we denote the restriction of $T$ to $X$ also by $T$) 
	and $Z_2$ to be the subscheme of $Z_1$ 
	defined by $H_3$, thus, $Z_2=D_1\cap D_2\cap D_3$, where $D_3$ is 
	the divisor defined by $H_3$. Let us compute the ideal 
	$I_Z:=[I_{Z_1}:I_{Z_2}]$.
	
	If $x,y,z$ are homogeneous coordinates on $\P^2$ then
	let $\C[x,y]$ (we abuse notation here and denote the affine coordinates also
	by $x$ and $y$) denote the coordinate 
	ring of the open set $\{z\neq 0\}$. 
	Denote by $f$ the equation $F$ in 
	$\C[x,y]$, similarly, for the other polynomials. 
	The inverse image of this open set in $X$ has coordinate ring 
	$$\C[x,y,t]/(t^2-f)\,.$$
	The ideal $I_{Z_1}=(f_1,f_2)$ and the ideal $I_{Z_2}=(f_1,f_2,t)$.
	If $I\subset J$ are two ideals in a ring then it is clear 
	that $I\subset [I:J]$. Thus, $I_{Z_1}\subset I_Z$.
	We claim that $t\in [I_{Z_1}:I_{Z_2}]$. An element of 
	$I_{Z_2}$ looks like $\lambda f_1+\mu f_2 + \theta t$. 
	Since $t^2=f\in (f_1,f_2)$ it follows that $t(\lambda f_1+\mu f_2 + \theta t)\in I_{Z_1}$
	that is, $t\in [I_{Z_1}:I_{Z_2}]$.
	Thus, $I_Z=(f_1,f_2,t)$.
	In particular, $Z=Z_2=D_1\cap D_2\cap D_3$.
	Thus, in the notation from \cite[\S1, page 2]{Tan-V},
	we may write 
	$$Z=D_1\cap D_2-D_1\cap D_2\cap D_3\,.$$
	Moreover, $D_1\cap D_2\cap D_3$ is of pure codimension 2 and is Cohen-Macaulay
	(since both depth and dimension are 0). 
	Thus, there is a bundle $\mc E$ which sits in the short exact sequence 
	\eqref{explicit-bundle} and
	has a global section 
	\begin{equation}\label{global-section}
		\mc O_X \to \mc E
	\end{equation}
	whose vanishing gives $Z$. 
	The line bundles $\mc O_X(D_i)$ 
	are all isomorphic to $\mc O_X(s)$.
	Thus, ${\rm det}(\mc E)=\mc O_X(s)$. 
	Twisting the short exact sequence \eqref{explicit-bundle} by $\mc O_X(2s)$
	we get a short exact sequence 
	\begin{equation}\label{explicit-bundle-1}
	0\to \mc E \to \mc O_X(s)^{\oplus 3} \to I_Z(2s)\to 0\,.
	\end{equation}
	Consider the commutative diagram
	\begin{equation*}
	\xymatrix{
		0 \ar[r]& I_{Z'} \ar[r]  \ar[d]^i& 
		\mathcal{O}_{\mathbb{P}^2} \ar[r] \ar[d]^{\pi^{\#}}& 
		\mathcal{O}_{Z'} \ar[d] \ar[r] & 0\\
		0 \ar[r]& \pi_*I_Z \ar[r]& 
		\pi_* \mathcal{O}_X \ar[r] & \pi_*\mathcal{O}_Z \ar[r] & 0 \,.
	}
	\end{equation*}
	In the notation we used above, the coordinate ring of 
	$Z'$ is $\C[x,y]/(f_1,f_2)$ and the coordinate ring of 
	$Z$ is $\C[x,y,t]/(f_1,f_2,t)\cong \C[x,y]/(f_1,f_2)$.
	Thus, the right vertical arrow is an isomorphism,
	which proves that the cokernel of $\pi^{\#}$ and $i$
	are isomorphic.

	Let $\sigma$ denote the involution of $X$ interchanging the two 
	sheets of the double cover.  
	One has the trace map ${\rm Tr}:\pi_*\mc O_X\to \mc O_{\P^2}$
	defined as follows. Let $U\subset \P^2$ be open and let 
	$f\in \mc O_X(\pi^{-1}(U))$. Define ${\rm Tr}(f):=f+f\circ \sigma\in \mc O_{\P^2}(U)$.
	Now if $f\in \mc O_{X}(\pi^{-1}(U))$ and $f$ vanishes 
	at a point $p\in Z$, then since $Z\subset B$ and $\sigma$ is 
	the identity on $B$, it follows that 
	$f\circ  \sigma$ also vanishes at $p$.
	From this it is clear that ${\rm Tr}$ maps $\pi_*I_Z$ to $I_{Z'}$.
	
	The map ${\rm Tr}$ gives a splitting of the map $\pi^{\#}$.
	Since {\rm Tr} maps $\pi_*I_Z$ to $I_{Z'}$ it follows that 
	${\rm Tr}$ gives a splitting of the map $i$.
	Thus, $\pi_*I_{Z}$ is isomorphic to the direct sum
	of $I_{Z'}$ and the cokernel of $i$.
	We saw above that the cokernel of $\pi^{\#}$
	and the cokernel of $i$ are isomorphic. 
	Since $\pi_*\mc O_X=\mc O_{\P^2}\oplus \mc O_{\P^2}(-s)$, see 
	property \eqref{splitting} in subsection \ref{double-planes-intro},
	it follows that the cokernel of $\pi^{\#}$ is isomorphic to 
	$\mc O_{\mb P^2}(-s)$. Thus, 
	it follows that 
	\begin{equation}\label{main-ses}
	\pi_*I_Z\cong I_{Z'}\oplus \mc O_{\mb P^2}(-s)\,.
	\end{equation}
	Thus, we have 
	\begin{equation}\label{h^0(I_Z(2s))}
		h^0(X,I_Z(2s))=h^0(\P^2,I_{Z'}(2s))+h^0(\P^2,\mc O_{\P^2}(s))\,.
	\end{equation}
	Using the short exact sequence 
	\begin{equation}\label{exact-seq-I_Z'}
	0\to \mc O_{\P^2}(-2s)\to \mc O_{\P^2}(-s)^{\oplus 2}\to I_{Z'}\to 0\,,
	\end{equation}
	we get that $h^0(\P^2,I_{Z'}(2s))=2h^0(\P^2,\mc O_{\P^2}(s))-1$.
	Using equation \eqref{h^0(I_Z(2s))} we get 
	$$h^0(X,I_Z(2s))=3h^0(\P^2,\mc O_{\P^2}(s))-1\,.$$
	Next we will compute $H^0(X,\mc E)$. Taking dual of \eqref{global-section}
	we get an exact sequence 
	$$0\to {\rm det}(\mc E)^\vee\to \mc E^\vee\to I_Z\to 0\,.$$
	Since ${\rm det}(\mc E)=\mc O_X(s)$ and $\mc E$ is of rank 2, we get 
	$\mc E^\vee = \mc E \otimes {\rm det}(\mc E)^\vee$, which gives 
	$$0 \to \mc O_X \to \mc E \to I_Z(s) \to 0\,.$$
	Applying $\pi_*$ to this we get 
	\begin{equation}\label{Koszul-equation}
	0\to \pi_*\mc O_X \to \pi_* \mc E \to \pi_*I_Z(s)\to 0\,.
	\end{equation}
	Using \eqref{exact-seq-I_Z'} we get $h^0(\P^2,I_{Z'}(s))=2$  
	and using \eqref{main-ses}  
	we get that 
	$$h^0(\P^2, \pi_*I_Z(s))=3\,.$$ From this it follows that 
	$h^0(\P^2,\pi_*\mc E)=4$. Applying $\pi_*$ to \eqref{explicit-bundle-1}
	and taking cohomology we get
	\begin{align*}
		h^1(\P^2,\pi_*\mc E)&=h^0(\P^2,\pi_*I_Z(2s))+h^0(\P^2,\pi_*\mc E) - 3-3h^0(\P^2,\mc O_{\P^2}(s))\\
			&=3h^0(\P^2,\mc O_{\P^2}(s))-1 + 4- 3-3h^0(\P^2,\mc O_{\P^2}(s))\\
			&=0\,.
	\end{align*}
	Consider the commutative diagram
	\[
	\xymatrix{
		H^0(\P^2,\pi_*\mc E)\ar[r]\ar[dr] & H^0(\P^2,\pi_*I_Z(s))\ar[d]\\
			&	H^0(\P^2,\mc O_{\P^2})
	}
	\]
	The vertical arrow is the projection in equation \eqref{main-ses}
	and is surjective. The horizontal arrow is surjective as is easily 
	seen by taking cohomology of the sequence 
	\eqref{Koszul-equation}. Thus, we have a map $\pi_*\mc E\to \mc O_{\P^2}$ 
	which induces a surjection on global sections. This shows that this map 
	is split.
	Thus, 
	$$\pi_*\mc E=\mc G \oplus \mc O_{\P^2}\,,$$
	where $\mc G$ is a locally free sheaf and sits in a short exact sequence 
	$$0\to \pi_*\mc O_X \to \mc G \to I_{Z'}(s)\to 0\,.$$
	We will now show that $\mc G$ is trivial. Consider the following pullback diagram.
	\begin{equation*} \label{dgm_pullback}
	\xymatrix{ 
		0 \ar[r]& \pi_* \mathcal{O}_X \ar[r] \ar@{=}[d]& 
		\mathcal{F} \ar[r] \ar[d]^a& 
		\mc O_{\P^2}^{\oplus 2} \ar[r] \ar[d]^b & 0 \\ 
		0 \ar[r] & \pi_* \mathcal{O}_X \ar[r] & 
		\mc G \ar[r] & 
		I_{Z'}(s) \ar[r]& 0 
	}
	\end{equation*} 
	From this it follows that $\mc F=\mc O_{\P^2}^{\oplus 3}\oplus \mc O_{\P^2}(-s)$
	since ${\rm Ext}^1(\mc O_{\P^2},\pi_*\mc O_X)=0$.
	We may split the top row and compose the splitting with $a$ to get a diagram 
	\begin{equation*} 
	\xymatrix{ 
		0 \ar[r]& \mc O_{\P^2}(-s) \ar[r] \ar[d]^d& \mc O_{\P^2}^{\oplus 2} 
		\ar[r] \ar[d]^c & I_{Z'}(s) \ar[r]\ar@{=}[d] & 0 \\ 
		0 \ar[r] & \pi_* \mathcal{O}_X \ar[r] & 
		\mc G \ar[r] & 
		I_{Z'}(s) \ar[r]& 0 
	}
	\end{equation*} 
	Suppose ${\rm Ker}\,c\neq 0$, then the image of $c$ is a sheaf of rank 1,
	which surjects onto $I_{Z'}(s)$. This forces that the image is isomorphic to 
	$I_{Z'}(s)$, which defines a splitting of the bottom row. However, since 
	$\mc G$ is locally free, this is not possible. Thus, ${\rm Ker}\,c=0$.
	
	Now let us consider the left vertical arrow 
	$d:\mc O_{\P^2}(-s)\to \mc O_{\P^2}\oplus \mc O_{\P^2}(-s)$. If the cokernel
	is $\mc O_{\P^2}$ then we get that $\mc G$ is the trivial bundle. The only 
	other possbility for the cokernel is $\mc O_C\oplus \mc O_{\P^2}(-s)$, where
	$C$ is a hypersurface of degree $s$ in $\P^2$. In this case, $\mc G$ sits in a 
	sequence 
	$$0\to \mc O_{\P^2}^{\oplus 2}\to \mc G \to \mc O_C\oplus \mc O_{\P^2}(-s)\to 0\,.$$
	Since $\mc G$ is a summand of $\pi_*\mc E$ and $H^1(\P^2,\pi_*\mc E)=0$,
	it follows that $H^1(\P^2,\mc G)=0$. This forces that 
	$$H^1(\P^2,\mc O_C)=0\,.$$
	But now using $0\to \mc O_{\P^2}(-s)\to \mc O_{\P^2}\to \mc O_C\to 0$ 
	we get $0=H^1(\P^2,\mc O_C)=H^2(\P^2,\mc O_{\P^2}(-s))=H^0(\P^2,\mc O_{\P^2}(s-3))^\vee$,
	which is not possible if $s\geq 3$. Thus, the cokernel of $d$ 
	is $\mc O_{\P^2}$ and so $\mc G$ and $\pi_*\mc E$ are trivial. 
	This proves that $\mc E$ is an Ulrich bundle on $X$. 
	
	Recall from \eqref{canonical} that the canonical line bundle of $X$ is $\mc O_X(s-3)$.
	Since ${\rm det}(\mc E)\cong \mc O_X(s)=\mc O_X(s-3)\otimes \mc O_X(3)$, it follows 
	that $\mc E$ is a special Ulrich bundle. 
\end{proof}

\begin{proof}[Proof of Theorem \ref{main-th-intro}]
For the convenience of the reader we recall the statement from 
\cite{Chiantini} that we are using. Their main result 
gives a description of all the possible complete intersections
of codimension $r$ that can be found on a general hypersurface
of degree $d$ in $\mb P^n$ when $2r\leq n+2$. In our situation $n=r=2$
and $d=2s$. In this situation 
the first point in \cite[Theorem 5.1]{Chiantini} 
enables us to conclude that for the general degree $2s$ hypersurface 
$F$ we can find degree $s$ hypersurfaces $F_1$ and $F_2$  
such that $F\in (F_1,F_2)$. Now we apply Theorem \ref{main-theorem}
and get that $X$ supports a special Ulrich bundle of rank 2.
This proves Theorem \ref{main-th-intro}.
\end{proof}

\end{document}